\documentclass[b5paper,reqno]{amsart}
\usepackage{amsfonts,lingmacros,tree-dvips, amsmath, amssymb, graphicx, mathrsfs}
\usepackage[hidelinks]{hyperref}

\usepackage[all]{xy}

\newdir{ >}{!/6pt/@{ }*:(1,0.2)@_{>}*:(1,-0.2)@^{>}}

\theoremstyle{plain}

\newtheorem{theorem}{Theorem}[section]

\theoremstyle{definition}
\newtheorem{definition}[theorem]{Definition}

\newtheorem{counter example}[theorem]{Counter Example}

\numberwithin{equation}{section}

\DeclareMathAlphabet{\mathscr}{OT1}{pzc}{m}{it} 
\begin{document}
\Large{
		\title{GENERALIZATION OF TWO THEOREMS OF STEINHAUS IN CATEGORY BASES}
		
		\author[S. Basu]{Sanjib Basu}
		\address{\large{Department of Mathematics,Bethune College,181 Bidhan Sarani}}
		\email{\large{sanjibbasu08@gmail.com}}
		
		\author[A.C.Pramanik]{Abhit Chandra Pramanik}
	\address{\large{Department of Pure Mathematics, University of Calcutta, 35, Ballygunge Circular Road, Kolkata 700019, West Bengal, India}}
	\email{\large{abhit.pramanik@gmail.com}}

		\thanks{The second author thanks the CSIR, New Delhi – 110001, India, for financial support}
	\begin{abstract}
	Here we unify two results of Steinhaus and their corresponding category analogues by extending them in the settings of category bases. We further show that in any perfect translation base, every abundant Baire set contains a full subset for which our second theorem fails.
	\end{abstract}
\subjclass[2020]{28A05, 54A05, 54E52}
\keywords{point-meager Baire base, perfect translation base, countable pseudobase, complete family}
\thanks{}
	\maketitle

\section{INTRODUCTION}
In $[10]$, Steinhaus proved that the distance set $D(A,B)$ of two sets $A$ and $B$ of positive Lebesgue measure contains an interval where by distance set, we mean the set of all mutual distances between points of $A$ and $B$. In the same paper, while dealing with the subsets of the real line $\mathbb{R}$, he proved even stronger theorems, these are : 
\begin{theorem}
	If $\{A_n\}_{n=1}^\infty$ is any infinite sequence of Lebesgue measurable sets with positive measures, then there exists an infinite sequence $\{a_n\}_{n=1}^\infty$ of distinct points such that $a_n \in A_n  (n=1,2,3,...)$ and their mutual distances are all rationals.
\end{theorem}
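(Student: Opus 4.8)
The plan is to reduce the assertion to the non-emptiness of a countable intersection of translates of the $A_n$, and to establish that non-emptiness by a compactness argument in which the successive translation parameters are chosen to be distinct rationals. First I would pass to compact sets: by inner regularity of Lebesgue measure, for each $n$ choose a compact $K_n\subseteq A_n$ with $0<\lambda(K_n)<\infty$; since a sequence witnessing the conclusion for the $K_n$ also witnesses it for the $A_n$, I may assume henceforth that every $A_n$ is compact of positive finite measure. Next note that a sequence $\{a_n\}$ with $a_n\in A_n$ whose mutual distances are all rational is precisely a sequence of the form $a_n=a_1+q_n$ with $q_1=0$ and $q_n\in\mathbb{Q}$; the $a_n$ are distinct exactly when the $q_n$ are distinct, and the requirement $a_n\in A_n$ becomes $a_1\in A_n-q_n$. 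Hence it suffices to produce distinct rationals $q_1=0,q_2,q_3,\dots$ with $\bigcap_{n\ge 1}(A_n-q_n)\neq\emptyset$.

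I would build the $q_n$ recursively so that the sets $C_n:=\bigcap_{i\le n}(A_i-q_i)$ form a decreasing chain of non-empty compact sets of positive measure. Put $q_1=0$, so $C_1=A_1$. Assume $q_1,\dots,q_n$ have been chosen, pairwise distinct, with $\lambda(C_n)>0$, and consider the correlation function $f(t)=\lambda\big(C_n\cap(A_{n+1}-t)\big)$. By Tonelli's theorem $\int_{\mathbb{R}}f(t)\,dt=\lambda(C_n)\,\lambda(A_{n+1})>0$, while $f$ is continuous (continuity of translation in $L^1$) and vanishes outside a bounded set; therefore $\{t:f(t)>0\}$ is a non-empty bounded open set. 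Removing the finitely many points $q_1,\dots,q_n$ leaves a non-empty open set, which therefore contains a rational $q_{n+1}\notin\{q_1,\dots,q_n\}$; for this choice $\lambda(C_{n+1})=f(q_{n+1})>0$ and $C_{n+1}\subseteq C_n$ is compact and non-empty, completing the recursion.

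Once the recursion is carried out, $\{C_n\}$ is a nested sequence of non-empty compact sets, so $\bigcap_n C_n\neq\emptyset$; choosing any $a_1$ in this intersection and setting $a_n=a_1+q_n$ yields points with $a_n\in A_n$ and pairwise differences $a_i-a_j=q_i-q_j$ that are rational and non-zero, which is the theorem. The step I expect to be the real obstacle is not any single estimate but the interplay of two features: guaranteeing that the \emph{infinite} intersection, and not merely each finite one, is non-empty --- which is exactly what the passage to compact sets secures --- and keeping the translation parameters distinct throughout, which succeeds because at each stage the set of admissible translations is open (indeed contains an interval) and hence is nowhere near exhausted by the finitely many earlier choices. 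This openness is the measure-theoretic Steinhaus phenomenon in disguise, and locating the right abstract substitute for it is what the generalization to category bases will hinge on.
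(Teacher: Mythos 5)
Your proof is correct, and it is a genuinely different route from the one the paper takes. The paper never proves Theorem 1.1 directly: it recovers it (together with Piccard's category analogue) as a special case of Theorem 2.5, which is proved abstractly in a point-meager Baire base that forms a complete family, with $f(x,y)=x+y$ and $D=\mathbb{Q}$. The structural skeleton is shared --- both arguments choose the translation parameters one at a time, build a decreasing sequence of nonempty sets, argue that the infinite intersection is nonempty, and read off $a_{1}$ from that intersection --- but the tools are quite different. You pass to compact subsets by inner regularity, locate each new rational $q_{n+1}$ via the continuity and positive integral of the correlation function $t\mapsto\lambda\bigl(C_{n}\cap(A_{n+1}-t)\bigr)$ (Tonelli plus continuity of translation in $L^{1}$), and finish with nested compactness; all three steps are specific to Lebesgue measure on $\mathbb{R}$, and your closing remark correctly identifies the openness of $\{t:f(t)>0\}$ as the Steinhaus phenomenon that must be axiomatized. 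The paper's proof replaces compactness by the operators $\Psi_{n}$ of a complete family, replaces the correlation function by the hypothesis that the set $D$ of admissible parameters meets every pair of regions through $f^{\eta}$, and --- because an abundant Baire set is only comeager in a region rather than containing a compact set of positive measure --- must peel off the countably many singular exceptional sets $F^{(n)}_{i}$ one index at a time, which is what forces the triangular array of regions $D_{ij}$ in place of your single chain $C_{n}$. Your approach buys brevity and self-containedness for the classical statement; the paper's buys the simultaneous unification of the measure and category cases and the extension to general mappings $f$, which is its stated purpose.
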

\begin{theorem}
	E being an infinite Lebesgue measurable set, there exists an enumerable set $P$ composed of points whose mutual distances are all rational numbers and a set $Z$ of measure zero such that $P\subseteq E \subseteq P^{\prime} \cup Z$ where $P^{\prime}$ represents the derived set of $P$.
\end{theorem}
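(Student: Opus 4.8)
The plan is to reduce the whole statement to producing a single coset $C=c+\mathbb Q$ of the rationals for which $P=E\cap C$ works: such a $P$ is automatically countable and has all its mutual distances rational, so the only genuine content is to arrange $E\subseteq P'\cup(\text{null set})$. If $|E|=0$ there is nothing to do: one takes any $p\in E$, sets $P=\{p\}$ and $Z=E$, so that $P\subseteq E\subseteq\varnothing\cup Z=P'\cup Z$. So I would assume $|E|>0$.

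First I would invoke the Lebesgue density theorem: let $D$ be the set of density points of $E$, so $E\setminus D$ is null and $|D|>0$, and fix a countable $\{d_i\}_{i\ge1}\subseteq D$ that is dense in $D$ (possible since every subset of $\mathbb R$ is separable). For $i,n\in\mathbb N$ put $F_{i,n}=\bigl(E\cap(d_i-\tfrac1n,\,d_i+\tfrac1n)\bigr)\setminus\{d_i\}$; each $F_{i,n}$ has positive measure because $d_i$ is a density point of $E$, and the point $d_i$ has been deliberately deleted. The key step is the classical zero-one law for rational translations: if $F$ is measurable with $|F|>0$ then $F+\mathbb Q$ is conull, since $F+\mathbb Q$ is invariant under the (dense) group of rational translations and is not null, hence conull by ergodicity. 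Applying this to each $F_{i,n}$ and taking a countable union, $\bigcup_{i,n}\bigl(\mathbb R\setminus(F_{i,n}+\mathbb Q)\bigr)$ is null; I would pick any $c$ outside it and set $C=c+\mathbb Q$. Then $c\in F_{i,n}+\mathbb Q$ for every $i,n$, which says precisely that $C\cap F_{i,n}\ne\varnothing$.

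Finally I would take $P=E\cap C$. For each $i$, choosing $z_{i,n}\in C\cap F_{i,n}$ produces points of $P$ that are distinct from $d_i$ and within $1/n$ of it, so $d_i\in P'$; since $P'$ is closed and $\{d_i\}$ is dense in $D$, we get $D\subseteq\overline{\{d_i\}}\subseteq P'$, whence $E\setminus P'\subseteq E\setminus D$ is null and $Z:=E\setminus P'$ finishes the argument. The one place where I expect real difficulty, rather than bookkeeping, is the zero-one law supplying a single coset $C$ that serves the whole countable family $\{F_{i,n}\}$ simultaneously; everything else follows once the density points are packaged into a countable dense subset and the offending point is removed from each $F_{i,n}$ so that the approximants lying in $P$ are forced to differ from $d_i$.
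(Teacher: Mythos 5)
Your proof is correct, and it takes a genuinely different route from the paper. The paper never argues this classical statement directly: it is quoted from Steinhaus as Theorem 1.2, and what the paper actually proves is the abstract generalization (Theorem 2.6), whose argument decomposes the set along a countable pseudobase, invokes the Theorem 1.1 analogue (itself established by a recursive region-shrinking construction using the complete-family maps $\Psi_n$) to plant one point of $P$ in each basic piece, and then runs a second recursive selection to show that almost every point is a cluster point. You instead lean on two specifically measure-theoretic tools, the Lebesgue density theorem and the zero-one law for the dense group of rational translations, which together hand you a single coset $c+\mathbb{Q}$ meeting every one of the countably many positive-measure sets $F_{i,n}$; the set $P=E\cap(c+\mathbb{Q})$ then has rational mutual distances for free, and the density points do the rest. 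This is slicker and more self-contained for the measure case --- and it incidentally yields Theorem 1.1 at once by choosing $c\in\bigcap_n(A_n+\mathbb{Q})$ --- but it does not transfer to the category-base setting the paper targets, where there is no translation-invariant measure, no density theorem, and no ergodicity to appeal to; the paper's heavier recursive machinery is the price of that generality. One small remark on your degenerate case: if ``enumerable'' is read as ``countably infinite'', no singleton fix is available, and indeed an infinite null set no two of whose points lie at rational distance admits no infinite such $P$ at all, so your reading (at most countable, with all the real content in the positive-measure case) is the only consistent one.
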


A set $A$ has the Baire property $[7]$ if it can be expressed as the symmetric difference $G\Delta P$ of an open set $G$ and a set $P$ of first category. Equivalently, if $A = (G - P) \cup Q$ where G is open and $P, Q$ are sets of first category. Steinhaus theorem on distance set has an exact category analogue in the realm of Baire category which is due to 
Picard [$8$]. He showed that if $A$ and $B$ are second category subsets of $\mathbb{R}$ with Baire property then their distance set contains an interval. Picard's theorem has been generalized by K.P.S and M. Bhaskara Rao [$1$] for sets in topological groups, by Kominek [$3$] in topological vector spaces and by Sander [$9$] in topological spaces with respect to the class of globally solvable mappings. The above two theorems (Theorem $1.1$ and Theorem $1.2$) were also given more general forms. Alongside with the measure theoretic results, these were set forth by Miller, Xenikakis and Polychronis [$4$] for set with Baire property in the real line in the light of certain specified classes of mappings $f:\mathbb{R}\times\mathbb{R}\mapsto\mathbb{R}$ each of which has first order continuous partial derivatives $f_x$ and $f_y$ non-vanishing on an open set containing $A\times A$ where $A$ is the union of sets in the sequence $\{A_n\}_{n=1}^\infty$.\\

In this paper, we prove two results each of which unifies  Theorem $1.1$ and Theorem $1.2$ with their corresponding category analogues in point-meager Baire category base having a countable pseudo base. Similar type of unifications were earlier established by Morgan [$5$] for perfect, translation category bases in the real line but ours is based on a different approach.

\section{PRELIMINARIES AND RESULTS}
All the definitions stated below are taken from [$5$].\\

By a category base,
\begin{definition}
We mean a pair $(X,\mathcal{C}$) where $X$ is a non-empty set and $\mathcal{C}$ is a family of subsets of $X$ non-empty members of which are called regions such that the following conditions are satisfied:
\begin{enumerate}
\item Every point of $X$ belongs to some region; i.e., $X = \cup$ $\mathcal{C}$.
\item Let $A$ be a region and $\mathcal{D}$ be any non-empty family of disjont regions having cardinality less than the cardinality of $\mathcal{C}$. Then\\
i) There exists a region $D\in\mathcal{D}$ such that $A\cap D$ contains a region provided  $A \cap ( \cup \mathcal{D}$) contains a region.\\
ii) There exists a region $B\subseteq A$ which is disjoint from every region in $\mathcal{D}$ provided  $A\cap(\cup \mathcal{D})$ contains no region.
\end{enumerate}
\end{definition}

In a category base ($X,\mathcal{C}$),
\begin{definition}
 A set $A$ is called \textbf{singular} if every region contains a region disjoint from $A$; \textbf{meager} if $A$ can be expressed as countable union of singular sets; \textbf{abundant} if $A$ is not meager; \textbf{co-meager} if it is the complement of a meager set; \textbf{Baire} if every region contains a subregion in which either $A$ or its complement (in $X$) is meager. 
\end{definition}

A category base is called \textbf{point-meager} if every singleton set in it is singular and a \textbf{Baire base} if every region in it is abundant.\\

Moreover, by a separable category base we mean a category base which contains a countable subfamily such that every region is abundant in at least one region from the subfamily. A subfamily of region having the property that every region contains at least one region from the subfamily is called a pseudo base. Having a countable pseudo base is a stronger condition than separability.\\

Let $\Phi$ be a set of one-to-one mappings of $X$ onto $X$ which is closed under formation of composition of mappings and formation of inverses.
\begin{definition}
	A family $\mathcal{S}$ of subsets of $X$ is invariant under $\Phi$, or, simply $\Phi$-invariant if $\phi$($\mathcal{S}$)=$\mathcal{S}$ for all $\phi$ $\in$ $\Phi$ where $\phi$($\mathcal{S}$)=\{$\phi$($S$): $S$$\in$$\mathcal{S}$\}
\end{definition}
It can be easily checked that in a category base ($X$,$\mathcal{C}$) if $\mathcal{C}$ is $\Phi$-invariant then so are the families of singular, meager and Baire sets.
\begin{definition}
	Any pair ($X$,$\mathcal{J}$) (or, briefly $\mathcal{J}$) is called a complete family if there exists a sequence $\{\Psi_n\}_{n=1}^\infty$,  $\Psi_n$ : $\mathcal{J}$ $\mapsto$ $\mathcal{J}$ satisfying 
	\begin{enumerate}
		\item  $\Psi_n(A)\subseteq A$ for every $A\in\mathcal{J}$ and $n=1,2,3,...$.
		\item $\bigcap\limits_{n=1}^{\infty}A_n\neq\phi$ for every sequence $\{A_n\}_{n=1}^\infty$ ($A_n\in\mathcal{J}$) for which the corresponding sequence $\{\Psi_n(A_n)\}_{n=1}^\infty$ is descending.
	\end{enumerate}
\end{definition} 
We now state our results
\begin{theorem}
	Let ($X,\mathcal{C}$) be a point-meager, Baire base in which meet of no two regions  (in case their intersection is non-empty) is a singular set. Moreover, let $\mathcal{C}$ be a complete family and $f:X\times X \mapsto X$ be a mapping such that for every $x,y\in X$, $f_x$ is one-to-one, $f^y$ is one-to-one and onto and  $\mathcal{C}$ is invariant under \{$f^y: y\in X$\} where $f_x:X\mapsto X$ and $f^y:X\mapsto X$ are the x-section and y-section of $f$. Let $D$ be a non-empty subset of $X$ which has  non-empty intersection with every region and also having the property that whenever \{$A,B\}\subseteq \mathcal{C}-\{\phi$\} there exists $\eta \in D$ such that $A\cap f^\eta(B)\neq \phi$. Then given any sequence  $\{A_n\}_{n=1}^\infty$ of abundant Baire sets in ($X,\mathcal{C}$), there exists a sequence \{$a_n\}_{n=1}^{\infty} (a_n\in A_n)$ and a sequence \{$\eta_n\}_{n=1}^{\infty}(\eta_n\in D)$ of distinct points satisfying the identity
	\begin{equation*}
		f(a_1,\eta_1)=f(a_2,\eta_2)=....=f(a_n,\eta_n)=...
	\end{equation*}
\end{theorem}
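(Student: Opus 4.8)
The plan is to recast the asserted chain of equalities as a single intersection statement and then build the points by a simultaneous induction driven by the completeness of $\mathcal{C}$. Since every $f^{y}$ is a bijection of $X$, asking for $a_{n}\in A_{n}$ with $f(a_{1},\eta_{1})=f(a_{2},\eta_{2})=\cdots$ amounts to asking for one point $c\in\bigcap_{n=1}^{\infty}f^{\eta_{n}}(A_{n})$ together with distinct $\eta_{n}\in D$: one then puts $a_{n}=(f^{\eta_{n}})^{-1}(c)$, which lies in $A_{n}$ exactly because $c\in f^{\eta_{n}}(A_{n})$, and indeed $f(a_{n},\eta_{n})=f^{\eta_{n}}(a_{n})=c$. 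I would also isolate at the outset the elementary fact that, under the present hypothesis, the meet of two regions, when non-empty, actually \emph{contains} a region --- indeed, if $A\cap B$ were region-free for regions $A,B$, a routine double application of clause (2)(ii) of the definition of a category base (testing an arbitrary region first against $B$, then against $A$) would produce a subregion disjoint from $A\cap B$, making $A\cap B$ singular, contrary to hypothesis.

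Next I would normalise the data. Each $A_{n}$, being an abundant Baire set, essentially contains a region: there is a region $R_{n}$ with $R_{n}\setminus A_{n}$ meager. (This is the familiar consequence of the Baire property together with a countable pseudobase --- if no region had this property, a maximal countable disjoint family of regions on which $A_{n}$ is meager would exhibit $A_{n}$ itself as a countable union of singular sets.) Fix decompositions $R_{n}\setminus A_{n}=\bigcup_{k=1}^{\infty}S_{n,k}$ with each $S_{n,k}$ singular. Since $\mathcal{C}$ is invariant under $\{f^{y}:y\in X\}$, so are the ideals of singular and of meager sets; hence for every $\eta$ the set $f^{\eta}(R_{n})$ is a region, $f^{\eta}(R_{n})\setminus f^{\eta}(A_{n})=\bigcup_{k}f^{\eta}(S_{n,k})$ is meager, and therefore $f^{\eta}(A_{n})\supseteq f^{\eta}(R_{n})\setminus\bigcup_{k}f^{\eta}(S_{n,k})$.

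The heart of the argument is an induction producing distinct $\eta_{n}\in D$ and regions $V_{n}$ for which $\{\Psi_{n}(V_{n})\}_{n\ge 1}$ is descending, $V_{n}\subseteq f^{\eta_{n}}(R_{n})$, and $V_{n}$ is disjoint from every $f^{\eta_{j}}(S_{j,k})$ with $j,k\le n$. At stage $n$ I start with the region $\Psi_{n-1}(V_{n-1})$ (or any fixed region when $n=1$), apply the solvability property of $D$ to the pair $\bigl(\Psi_{n-1}(V_{n-1}),R_{n}\bigr)$ to get $\eta_{n}\in D$ with $\Psi_{n-1}(V_{n-1})\cap f^{\eta_{n}}(R_{n})\neq\phi$, pass by the opening remark to a region $W\subseteq\Psi_{n-1}(V_{n-1})\cap f^{\eta_{n}}(R_{n})$, and finally strip away the finitely many singular sets $f^{\eta_{j}}(S_{j,k})$ ($j,k\le n$) one after another, each time retreating to a subregion disjoint from the next one, reaching a region $V_{n}\subseteq W$ disjoint from all of them. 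Because $\Psi_{n}(V_{n})\subseteq V_{n}\subseteq\Psi_{n-1}(V_{n-1})$, the sequence $\{\Psi_{n}(V_{n})\}$ stays descending, so completeness of $\mathcal{C}$ gives a point $c\in\bigcap_{n}V_{n}$. This $c$ lies in $f^{\eta_{n}}(R_{n})$ for all $n$ and, by the diagonal bookkeeping, outside $f^{\eta_{j}}(S_{j,k})$ for all $j,k$; hence $c\notin f^{\eta_{j}}(R_{j}\setminus A_{j})$, so $c\in f^{\eta_{j}}(R_{j}\cap A_{j})\subseteq f^{\eta_{j}}(A_{j})$ for every $j$, which is precisely the intersection statement sought.

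The step I expect to be the genuine obstacle is forcing the $\eta_{n}$ to be truly \emph{distinct} --- equivalently, ensuring that at stage $n$ the solvability property still delivers a parameter lying \emph{outside} the finite set $\{\eta_{1},\dots,\eta_{n-1}\}$. What must be shown is that for any regions $P,Q$ the set $\{\eta\in D:\;P\cap f^{\eta}(Q)\neq\phi\}$ is infinite, and the bare solvability clause alone does not obviously give this: one has to draw on the remaining hypotheses --- that $D$ meets every region, that finite sets are meager by point-meagerness, and the full invariance of $\mathcal{C}$ under $\{f^{y}\}$ --- to guarantee infinitely many admissible parameters at each stage. A subsidiary point is that the stripping-away of singular sets inside $W$ must not destroy the descent of $\{\Psi_{n}(V_{n})\}$; this is automatic since each $\Psi_{n}(V_{n})$ sits inside $V_{n}$, and it is the reason the induction is organised through the regions $\Psi_{n}(V_{n})$ rather than through the $V_{n}$ themselves.
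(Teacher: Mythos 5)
Your proposal follows essentially the same route as the paper's proof: reduce each $A_n$ to a region $R_n$ modulo a countable union of singular sets, build by induction a sequence of regions $V_n$ (the paper's $H_{nn}$) with $\{\Psi_n(V_n)\}$ descending inside the regions $f^{\eta_n}(R_n)$ while progressively avoiding the images of the singular pieces, and invoke completeness to extract a common point $c$ pulling back to $a_n=(f^{\eta_n})^{-1}(c)$; your image-side, square-indexed stripping of the sets $f^{\eta_j}(S_{j,k})$ is only a cosmetic variant of the paper's domain-side, diagonal bookkeeping with the sets $F^{(n)}_k$. The one step you flag as a genuine obstacle --- extracting \emph{distinct} $\eta_n$ from the solvability property of $D$ --- is treated no more fully in the paper, which simply asserts that point-meagerness and the hypotheses on $D$ permit such a choice.
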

\begin{proof}
	In the given category base ($X,\mathcal{C}$), we choose regions $C_0,C_1,C_2$, $...,C_n$,... such that $C_0$ is arbitrary but fixed and $C_n -A_n$ is meager for $n=1,2,3,...$. As every abundant Baire set is abundant everywhere in some region and each $A_n$ is an abundant Baire set, this choice is justified. Let $C_n-A_n=\bigcup\limits_{i=1}^{\infty}F^{(n)}_i$ where $F^{(n)}_i$ are singular sets $(n,i=1,2,...)$. We now choose region $D_{11}\subseteq C_1-F^{(1)}_1$ and $\eta_1\in D$ such that $C_0\cap f^{\eta_1}(D_{11})\neq\phi$. Since by hypothesis the meet of no two regions can be singular, so by Theorem $2$ (Chapter $1$, Section II, [$5$]) there exists a region $H_{11}$ such that $\Psi_1(H_{11})\subseteq H_{11}\subseteq C_0\cap f^{\eta_1}(D_{11})$. But then  ${f^{\eta_1}}^{-1}(\Psi_{1}(H_{11}))\subseteq D_{11}$ $({f^{\eta_1}}^{-1}$ is the inverse of the function $f^{\eta_1})$ and ${f^{\eta_1}}^{-1}(\Psi_{1}(H_{11}))$ being a region according to our choice of function $f$, we may choose region $D_{12}$ such that $D_{12}\subseteq {f^{\eta_1}}^{-1}(\Psi_{1}(H_{11}))-F^{(1)}_2$, region $D_{21}$ such that $D_{21}\subseteq C_2-{F^{(2)}_1}$ and $f^{\eta_1}(D_{12})\cap f^{\eta_2}(D_{21}) \neq \phi$ for some $\eta_2\in D$. Consequently, by the same region as stated above, there exists a region $H_{22}$ such that $\Psi_2(H_{22})\subseteq H_{22}\subseteq f^{\eta_1}(D_{12})\cap f^{\eta_2}(D_{21})$. Now choose region $D_{13}$ such that $D_{13}\subseteq {f^{\eta_1}}^{-1}(\Psi_{2}(H_{22}))-F_3^{(1)}$, region $D_{22}\subseteq {f^{\eta_2}}^{-1}(f^{\eta_1}(D_{13}))-F_2^{(2)}$ and region $D_{31}\subseteq C_3-F_1^{(3)}$ such that $f^{\eta_1}(D_{13})\cap f^{\eta_2}(D_{22})\cap f^{\eta_3}(D_{31})\neq \phi$ for some $\eta_3\in D$. Consequently, there exists region $H_{33}$ such that $\Psi_{3}(H_{33})\subseteq H_{33}\subseteq f^{\eta_1}(D_{13})\cap f^{\eta_2}(D_{22})\cap f^{\eta_3}(D_{31})$. Now suppose after reaching the inclusion $\Psi_{n-1}(H_{n-1 n-1})\subseteq H_{n-1 n-1}\subseteq f^{\eta_1}(D_{1 n-1})\cap f^{\eta_2}(D_{2 n-2})\cap.....\cap f^{\eta_{n-1}}(D_{n-1 1})$, we further choose region $D_{1n}\subseteq {f^{\eta_1}}^{-1}(\Psi_{n-1}(H_{n-1 n-1}))-F_n^{(1)}$, region $D_{2n-1}\subseteq {f^{\eta_2}}^{-1}(f^{\eta_1}(D_{1n}))-F_{n-1}^{(1)}$,......and finally region $D_{n1}\subseteq C_n-F_1^{(n)}$ such that $f^{\eta_1}(D_{1n})\cap f^{\eta_2}(D_{2n-1})\cap.....\cap f^{\eta_{n-1}}(D_{n-12})\cap f^{\eta_n}(D_{n1})\neq \phi$ for some $\eta_n\in D$. Since our category base is a point-meager Baire base, the hypothesis ensures that we can choose the sequence \{$\eta_n\}_{n=1}^{\infty}$ in such a manner that all its members are mutually distinct. It is easy to check that the sequence $\Psi_1{(H_{11})},\Psi_2{(H_{22})},.....,\Psi_n{(H_{nn})},....$ is decreasing and so $\bigcap\limits_{n=1}^{\infty}H_{nn}\neq\phi$. But $\bigcap\limits_{n=1}^{\infty}H_{nn}\subseteq f^{\eta_1}(A_1)\cap f^{\eta_2}(A_2)\cap....\cap f^{\eta_n}(A_n)\cap...$. Hence there exists  a sequence \{$a_n\}_{n=1}^{\infty}$ ($a_n\in A_n$) made up of distinct terms (because $f_x$ is one-to-one ) such that \begin{equation*}
	f(a_1,\eta_1)=f(a_2,\eta_2)=....=f(a_n,\eta_n)=...	
	\end{equation*}			
\end{proof}
This finishes the proof.\\

In the following theorem, the notion of cluster point of a sequence has been used in the following sense. In a category base ($X,\mathcal{C}$), a point `$a$' is a cluster point of $E=\{a_n\}_{n=1}^{\infty}$ if for every region $C$ containg `$a$' and for every natural number m, there exists a natural number $p>m$ such that $a_p\in C$.
\begin{theorem}
	Let ($X,\mathcal{C}$) be a point meager, Baire base in which the conditions stated in the above theorem  are satisfied. Moreover, let ($X,\mathcal{C}$) has a countable pseudobase. Then given any abundant Baire set $A$, there exists 
	\begin{enumerate}
		\item a countably infinite subset $P=\{a_1,a_2,....,a_n,...$\} of $A$ and an infinite sequence \{$\eta_n\}_{n=1}^{\infty}$ of distinct terms satisfying \begin{equation*}
			f(a_1,\eta_1)=f(a_2,\eta_2)=....=f(a_n,\eta_n)=...
		\end{equation*}	
			and 
			\item a meager set $H$ such that \begin{equation*}
				 P\subseteq A\subseteq P^{\prime}\cup H
			\end{equation*}
		where $P^{\prime}$ denotes the set of all cluster points of $P$.
	\end{enumerate}
\begin{proof}
	Let $\mathcal{B}$ be a countable pseudobase. Since any category base possesssing a countable pseudobase is separable, it is easy to check that $X-\cup\mathcal{B}$ is meager. We set $A^*=A\cap(\cup\mathcal{B})$ and $A_n=A^*\cap B_n$ (n=1,2,3,...) where $\mathcal{B}=\{B_1,B_2,...,B_n,..\}$. We may assume that no $A_n$ is meager for otherwise, we may remove it without affecting our conclusions. By Theorem $2.5$, there exists infinite sequences \{$a_n\}_{n=1}^{\infty}$ ($a_n\in A_n$) and \{$\eta_n\}_{n=1}^{\infty}$ ($\eta_n\in D$) of distinct terms satisfying \begin{equation*}
			f(a_1,\eta_1)=f(a_2,\eta_2)=....=f(a_n,\eta_n)=...
	\end{equation*}
This proves ($1$).\\

We set $P=\{a_1,a_2,..,a_n,..$\} and claim that every point of $A^*$ is a cluster point of $P$. For any $x\in A^*$, we choose arbitrarily a region $C$ containing $x$ and a natural number $m$. Let $k_1$ be the least natural number such that $B_{k_1}\subseteq C$. If $k_1>m$, we put $p=k_1$. Then $p>m$ and $a_p\in A^*\cap B_p\subseteq C$. Otherwise, if $k_1\leq m$, we carry out the following procedure: choose $B_{k_2}\subseteq \Psi(B_{k_1})-\{a_{k_1}$\} and select $a_{k_2}\in A^*\cap B_{k_2}$ where $\Psi(B_{k_1})\subseteq B_{k_1}\subseteq C$. Again choose $B_{k_3}\subseteq \Psi(B_{k_2})-\{a_{k_2}$\} and select $a_{k_3}\in A^*\cap B_{k_3}$. Continuing in this manner where the choices are justified on account of the facts that the category base is point-meager and complete, we obtain by Archimedean property a natural number $k_j$ such that $k_j>m$. We put $k_j=p$ so that $a_p\in A^*\cap B_p\subseteq C$. This serves our purpose. Finally, setting $H=A\cap (X-\cup \mathcal{B}$) we get 
	$P\subseteq A\subseteq P^{\prime}\cup H$ which proves ($2$)
	\end{proof}
Hence the Theorem.
\end{theorem}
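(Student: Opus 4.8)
The plan is to exploit the countable pseudobase $\mathcal{B}=\{B_1,B_2,\dots\}$ to decompose $A$ into countably many abundant Baire pieces, to read off $(1)$ from Theorem $2.5$ applied to that sequence, and to obtain $(2)$ from a \emph{deepening} construction carried out inside a region. First I would assemble the auxiliary facts used throughout. (a) Since $\mathcal{B}$ is a pseudobase, every region contains a subregion lying in $\cup\mathcal{B}$, so $X-\cup\mathcal{B}$ is singular, hence meager. (b) The meet of a Baire set with a region is again a Baire set; combined with the fact, recalled in the proof of Theorem $2.5$, that every abundant Baire set is abundant everywhere in some region, this gives: if $A$ is abundant in a region $C$ then $A$ is abundant everywhere in some subregion of $C$ (here the hypothesis that a non-empty meet of two regions contains a region --- Theorem $2$, Chapter $1$, Section II of [5] --- is also used). (c) For every Baire set $S$ the set $S\cap U_S$ is meager, where $U_S=\bigcup\{C\in\mathcal{C}:S\text{ is meager in }C\}$: indeed $U_S$ differs from $\bigcup\{B_n\in\mathcal{B}:S\text{ is meager in }B_n\}$ by a singular set --- a short verification using (b) and the fact that the family of regions in which $S$ is meager is closed downwards --- while $S$ meets that countable union of regions in a countable union of meager sets.

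Now set $I=\{n:A\cap B_n\text{ is abundant}\}$. Since $A$ is abundant Baire, by (b) it is abundant everywhere in some region, which contains a $B_{n_0}$; a routine iteration deleting a point and invoking point-meagerness then produces infinitely many pairwise distinct indices in $I$, so $I$ is infinite. Re-indexing $\{A\cap B_n:n\in I\}$ as a sequence of abundant Baire sets and applying Theorem $2.5$ yields points $a_n\in A\cap B_n$ $(n\in I)$ together with distinct $\eta_n\in D$ satisfying $f(a_1,\eta_1)=f(a_2,\eta_2)=\cdots$; as in the proof of Theorem $2.5$ the injectivity of each $f_x$ forces the $a_n$ to be pairwise distinct, so $P:=\{a_n:n\in I\}$ is a countably infinite subset of $A$. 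This proves $(1)$.

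For $(2)$, put $H=A\cap U_A$, which is meager by (c); it remains to show $A-H\subseteq P'$, i.e. $A-U_A\subseteq P'$. Fix $x\in A-U_A$, a region $C$ containing $x$, and $m\in\mathbb{N}$. Since $x\notin U_A$, $A$ is abundant in $C$, so by (b) there is a region $C'\subseteq C$ in which $A$ is abundant everywhere; the pseudobase supplies some $B_{k_1}\subseteq C'$, and $A\cap B_{k_1}$ is abundant (indeed abundant everywhere on $B_{k_1}$), so $k_1\in I$ and $a_{k_1}\in A\cap B_{k_1}\subseteq C$. If $k_1>m$ we are done; otherwise point-meagerness yields a region inside $B_{k_1}-\{a_{k_1}\}$, the pseudobase supplies a $B_{k_2}$ inside that region (so $k_2\in I$, $a_{k_2}\in C$ and $a_{k_2}\neq a_{k_1}$), and iterating one obtains infinitely many pairwise distinct indices $k_j\in I$ with $a_{k_j}\in C$. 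An infinite set of natural numbers being unbounded, some $k_j>m$, so $C$ contains a member of $P$ of index $>m$. Hence $x$ is a cluster point of $P$, so $A-U_A\subseteq P'$ and therefore $P\subseteq A\subseteq P'\cup H$.

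The crux is fact (c) --- equivalently, the assertion that once a meager set is removed from $A$ the cluster set $P'$ recaptures what remains. Abundance does not localise in an arbitrary category base, so this cannot be argued pointwise; one must genuinely use the countable pseudobase (to replace the uncountable union $U_A$ by a countable one up to a singular set, as in (c)) together with the structural axioms. A secondary point requiring care is checking that the deepening loop never stalls and drives the indices $k_j$ upward without bound: this is precisely where point-meagerness (to delete $a_{k_j}$ and still recover a region), the pseudobase (to re-enter $\mathcal{B}$, hence $I$), and the structure theory of Baire sets (fact (b)) must all be used in concert.
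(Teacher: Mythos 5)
Your proof is correct and follows the same overall architecture as the paper's: decompose $A$ along the countable pseudobase, feed the abundant pieces $A\cap B_n$ to Theorem 2.5 to obtain $(1)$, and establish $(2)$ by iterated shrinking of pseudobase elements inside a given region, using point-meagerness and completeness to drive the indices upward. The genuine difference lies in the choice of the exceptional meager set $H$, and here your version is the more careful one. The paper takes $H=A\cap(X-\cup\mathcal{B})$ and claims that \emph{every} point of $A^*=A\cap(\cup\mathcal{B})$ is a cluster point of $P$; but for $x\in A^*$ lying in a region $C$ in which $A$ is meager (an ``isolated'' or locally meager point of $A$ in the topological case), the least index $k_1$ with $B_{k_1}\subseteq C$ may well have $A^*\cap B_{k_1}$ meager or even empty, so the point $a_{k_1}$ the paper wants to ``select'' need not exist, the iteration stalls, and indeed such an $x$ need not be a cluster point of any countable subset of $A$. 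Your $H=A\cap U_A$ absorbs exactly these points, and your localization step (fact (b)), which guarantees that the $B_{k_j}$ chosen inside $C$ actually carry abundant pieces of $A$ and hence indices in $I$, is precisely what the paper's iteration is missing. The one place where your sketch leaves real work undone is the verification of fact (c): one cannot conclude that $U_S-V_S$ is singular merely by writing it as a union of the singular sets $C-V_S$ over all regions $C$ in which $S$ is meager, since that union may be uncountable. The fact does hold, but the clean argument goes region by region via the Baire property of $S$: any region $R$ contains a subregion $R'$ in which either $S$ or $X-S$ is meager; in the first case $R'$ contains some $B_n\subseteq V_S$, and in the second $S$ is abundant everywhere in $R'$, so $R'$ is disjoint from $U_S$ by the hypothesis that a non-empty meet of two regions is non-singular. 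With that supplied, your argument is complete, and it in fact repairs rather than merely reproduces the proof in the paper.
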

Earlier Morgan (Theorem $3, 5$, section II, Chapter $6$, [$5$]) unified the two theorems of Steinhaus (stated in the introduction) and the category analogues in perfect translation bases. From the fact that $f$ in the Theorem $2.5$ and Theorem $2.6$ may be chosen in particular as the function ($x,y$)$\mapsto x+y$ from $\mathbb{R}\times\mathbb{R}$ to $\mathbb{R}$, one may observe that our theorems too resemble those of Morgan in the sense that they also do the same unification under a different approach.\\

Following Grzegorek and Labuda [$2$], in any category base, we may say that a set $F$ is a full subset of a set $E$ if $F\subseteq E$ and for any Baire set $B$, $F\cap B$ is nonmeager whenever $E\cap B$ is so. If $E$ is a Baire set, this is equivalent to stating that $E-F$ can't contain any abundant Baire set. It is interesting to note that in a perfect translation base, given any abundant Baire set $B$, there exists a full subset of $B$ for which Theorem$2.6$ fails to hold. To prove this, we proceed as follows:\\

First we costruct a Vitali set $V$ in $\mathbb{R}$ which is also Bernstein in nature [$6$]. Since $\mathbb{R}=\bigcup\limits_{r\in \mathbb{Q}}(V+r)$ ($\mathbb{Q}$ is the set of rational), so for some $r_0\in\mathbb{Q}$, ($V+r_0)\cap B$ is abundant. Moreover, ($V+r_0)\cap B$ is a full subset of $B$, for otherwise, it would be possible to fit an abundant Baire set in $B$ which is disjoint from ($V+r_0)\cap B$. But this is impossible because every abundant Baire set in any perfect base contains a perfect set (Theorem $8$, Section II, Chapter $5$, [$5$]) and $V$ is Bernstein. Moreover, Theorem $2.6$ fails for the set ($V+r_0)\cap B$ is a subset of a Vitali set and therefore no two distinct points in it can be at a rational distance from one another.\\

Now ($V+r_o)\cap B$ is also a non-Baire set, for otherwise, by Theorem $3$ and Theorem $5$ (Section V, Chapter $6$, [$5$]), ($V+r_0)\cap B$ would not be an ARD set [$6$]. But non-Baireness of ($V+r_0)\cap B$ can also be derived without using Morgan's theorems. In fact, we can prove more, that no abundant  subset of ($V+r_0)\cap B$ can be Baire. If possible, let $E$ be an abundant Baire set contained in ($V+r_0)\cap B$. Choose $r^\prime\in\mathbb{Q}$ and fix it. Since $\mathbb{Q}-\{r^\prime\}$ is dense, by Theorem $3$ (Section I, Chapter $6$, [$5$]), $\bigcup\limits_{r\in{\mathbb{Q}-\{r^\prime\}}}(E+r)$ is co-meager. Consequently, the set $E+r^\prime$ which is disjoint with $\bigcup\limits_{r\in{\mathbb{Q}-\{r^\prime\}}}(E+r)$ is meager $-$ a contradiction.

\bibliographystyle{plain}

	\end{document}